\newtheorem{theorem}{Theorem}[section]
\newtheorem{prop}[theorem]{Proposition}
\theoremstyle{definition}
\theoremstyle{remark}
\numberwithin{equation}{section}
\begin{document}

\title[Periods of second kind differentials of $(n,s)$-curves]
{Periods of second kind differentials of $(n,s)$-curves}

\author{J. C. Eilbeck}
\address{ Department of Mathematics and Maxwell Institute, Heriot-Watt
  University, Edinburgh, EH14 4AS, UK}
\email{J.C.Eilbeck@hw.ac.uk}

\author{K. Eilers}
\address{Faculty of Mathematics, University of Oldenburg,
  Carl-von-Ossietzky-Str. 9-11, 26129 Oldenburg, Germany}
\email{keno.eilers@hotmail.de}

\author{V. Z. Enolski}
\address{School of Mathematics and Maxwell Institute, Edinburgh
  University, Edinburgh, EH9 3JZ, UK, on leave from the Institute of
  Magnetism, National Academy of Sciences of Ukraine, Kiev, 03142,
  Ukraine}
\email{Viktor.Enolskiy@ed.ac.uk}

\maketitle

\hfill{Dedicated to the 70th birthday of Victor Buchstaber}
\vskip 1cm
\begin{abstract}
  For elliptic curves, expressions for the periods of elliptic
  integrals of the second kind in terms of theta-constants, have been
  known since the middle of the 19th century.  In this paper we
  consider the problem of generalizing these results to curves of
  higher genera, in particular to a special class of algebraic curves,
  the so-called $(n,s)$-curves.  It is shown that the representations
  required can be obtained by the comparison of two equivalent
  expressions for the projective connection, one due to Fay-Wirtinger
  and the other from Klein-Weierstrass.  As a principle example, we
  consider the case of the genus two hyperelliptic curve, and a number
  of new Thomae and Rosenhain-type formulae are obtained.  We
  anticipate that our analysis for the genus two curve can be extended
  to higher genera hyperelliptic curves, as well as to other classes
  of $(n,s)$ non-hyperelliptic curves.

\end{abstract}
\section{Introduction}

We discuss the following problem, which is solved in particular
  cases: {\em Consider a curve $\mathcal{C}$ of genus $g>1$ and its
    $\mathfrak{a}$ and $\mathfrak{b}$-periods of holomorphic
    differentials $2\omega,2\omega'$.  Let $2\eta,2\eta'$ be the
    periods of the differentials of the second kind conjugated to
    $2\omega,2\omega'$ according to the generalized Legendre relations
\begin{equation}
\eta^T\omega = \omega^T\eta,\quad \eta^T\omega'-\omega^T \eta' =
\frac{\imath\pi}{2}, \quad {\eta'}^T\omega'={\omega'}^T\eta'.
\label{legendre}\end{equation}
Express the periods of the differentials of the second kind in terms
of the data, including $\mathfrak{a}$-periods, $2\omega$, and
$\theta$-constants, depending on the Riemann period matrix
$\tau=\omega'/\omega$, and coefficients of the polynomial defining the
curve $\mathcal{C}$}

In the case of the elliptic curve $y^2=4x^3-g_2x -g_3$, the question
posed  is answered by the Weierstrass formulae
\begin{align}
  \eta=-\frac{1}{12\omega} \sum_{k=2}^4
  \frac{\vartheta_k''(0)}{\vartheta_k(0)} \quad \text{and
    equivalently}\quad \eta=-\frac{1}{12\omega}
  \frac{\vartheta_1'''(0)}{\vartheta_1'(0)},
\label{Weierstrass1}
\end{align}
where $2\eta$ is the $\mathfrak{a}$-period of the elliptic
differential of the second kind $-x\mathrm{d}x/y$.  For algebraic
curves of higher genera, the {\it second period matrix},
$(2\eta,2\eta')$, in the terminology of \cite{mar06}, appears
naturally in the definition of the Riemann $\theta$-function; the
possibility of expressing it in terms of the {\it first period
  matrix}, $(2\omega,2\omega')$ and the $\theta$-constants is of
theoretical interest.  It is also useful for the implementation of
calculations by computer algebra, e.g.\ the current versions of
Maple/algcurves calculates only periods of differentials of the first
kind.  The $\theta$-constant representation of periods of
differentials of the second kind is also important for defining the
multi-dimensional $\sigma$-function which is currently of interest:
the basic theory of the $\sigma$-function has been intensively
developed over the last few decades by V.~M.~Buchstaber with
co-workers, see the recent manuscript \cite{bel12}.

Multi-variate $\sigma$-functions were introduced by F.\ Klein,
\cite{kle888}, who formulated the program of construction of Abelian
functions on the basis of these functions in \cite{kle890}. The
expression of periods of the second kind, in terms of
$\theta$-constants, also modular forms built on $\theta$-constants, is
a part of this program. Klein developed the theory in the cases of
hyperelliptic curves and arbitrary genus 3 curves to realise his
program in particular cases.  Later the hyperelliptic theory was well
documented by Baker \cite{bak897} and especially the case of the genus
two curve \cite{bak907}.  We present here an approach to solve the
problem formulated above for the family of $(n,s)$-curves; these
represent a natural generalization of the Weierstrass elliptic cubic
to higher genera.

$(n,s)$-curves are defined as follows. Let $n$ and $s$ be a pair of
co-prime integers such that $s>n \geq 2$. Those non-negative natural
numbers $w_1,w_2,\ldots,w_g$, which cannot be represented in the form
$\alpha n+\beta s$ with non-negative integer $\alpha$ and $\beta$,
form the Weierstrass gap sequence of length $g=(n-1)(s-1)/2$.  The
$(n,s)$-curve is the non-degenerate plane curve of genus $g$ given by
the polynomial equation
\begin{equation}
f(x,y)=y^n-x^s-\sum_{\alpha,\beta} \lambda_{\alpha n+\beta s} x^{\alpha}y^{\beta}=0
\label{nscurve}
\end{equation}
with $\lambda_k\in \mathbb{C}$ and $0\leq \alpha < s-1, \quad 0\leq
\beta < n-1$.

The notion of $(n,s)$-curves was introduced in \cite{bel999} and now
attracts much interest.  It is shown in \cite{bl08} that models of
$(n,s)$-curves are better adapted to the construction of
multi-variable $\sigma$-functions than models suggested by Weierstrass
and later models of mini-versal deformations of singularities of the
form $y^n=x^s$.  We also mention Klein \cite{kle888}, \cite{kle890}
and recent works presenting effective description of multi-variate
$\sigma$-functions \cite{nak10} and \cite{ksh12}.  In particular, the
approach of Klein (for any Riemann surface of genus 3) and
\cite{ksh12} (for an arbitrary Riemann surface of any genus) to the
theory of higher genus sigma-functions is based on resolving the
generalized Legendre relations in terms of theta-constants.  Using
$(n,s)$-curves, it is possible to develop the construction of Abelian
functions and the associated integrable PDEs in terms of the
$\sigma$-function of the trigonal curve, \cite{eemop08},
\cite{bego08}, to develop the study of space curves \cite{mat11},
\cite{an12}, to consider $\tau$-functions of integrable hierarchies as
$\sigma$-functions, \cite{nak10a}, \cite{he11}, to develop the
description of classical surfaces like Kummer, Coble surfaces
\cite{bel12}, \cite{egop13}, to describe Jacobi inversion on the
strata of non-hyperelliptic Jacobians \cite{mp08}, \cite{bef12}, to
develop number-theoretical problems \cite{beh05}, \cite{kmp12} and
others.

The present note aims to describe the moduli of the $\sigma$-function,
but we anticipate that its content and area of applicability should be
more general.

\section{The method}
Let $\mathcal{C}$ be a plane algebraic curve of genus $g$ given by the
polynomial equation $f(x,y)=0$. Introduce the bi-differential
$\Omega(Q,R)$ on $\mathcal{C}\times \mathcal{C}\ni (Q,R)$, which is
called the {\em canonical bi-differential of the second order} if it
is
\begin{itemize}

\item symmetric
\begin{equation}
  \Omega(Q,R)=\Omega(R,Q),\label{symmetric}
\end{equation}

\item normalized at $\mathfrak{a}$-periods:
\begin{equation}
  \oint_{\mathfrak{a}_k} \Omega(Q,R)=0, \quad k=1,\ldots,g \label{normalization}
\end{equation}

\item and has the only pole of the second order along the
diagonal, in other words it has the following expansion
\begin{equation}
  \Omega(Q,R) = \frac{{\mathrm d}\xi(Q){\mathrm d}\xi(R)}
  {(\xi(Q)-\xi(R))^2}+ \frac{1}{6}S(P)  + \text{higher order terms}  ,
  \label{expansion}
\end{equation}
where $\xi(Q)$ and $\xi(R)$ are local coordinates of points
$Q=(x,y)$ and $R=(z,w)$ in the vicinity of a point $P$ respectively.
The quantity $S(P)$ is called the {\it holomorphic
  projective connection}.
\end{itemize}
The bi-differential $\Omega(Q,R)$ is uniquely defined by the
conditions (\ref{symmetric}), (\ref{normalization}) and
(\ref{expansion}).  We realise it as follows. Let $\theta$ be the
standard $\theta$-function defined by its Fourier series
\begin{equation}
  \theta(\boldsymbol{z};\tau) =\sum_{\boldsymbol{n}\in \mathbb{Z}^g}
  \mathrm{exp}
  \left\{  \imath\pi \boldsymbol{n}^T\tau \boldsymbol{n}
  + 2\imath\pi \boldsymbol{z}^T\boldsymbol{n}    \right\},
\end{equation}
where $\tau$ is the Riemann period matrix of the normalised holomorphic
differentials $\boldsymbol{v}=(v_1,\ldots, v_g)^T$,
\begin{equation}
  \oint_{\mathfrak{a}_j} v_i=\delta_{i,j}, \quad\oint_{\mathfrak{b}_j} v_i=\tau_{i,j}
  , \quad i,j=1,\ldots,g.\label{holdiff1}
\end{equation}
Let $\boldsymbol{\mathfrak{A}}$ be a non-singular point of the
$\theta$-divisor, $\boldsymbol{\mathfrak{A}}\in (\theta)$,
i.e. $\theta(\boldsymbol{\mathfrak{A}})=0$, $\mathrm{grad} \,
\theta(\boldsymbol{\mathfrak{A}})\neq 0 $. Then
\begin{equation}\label{bidiffW}
  \Omega(Q,R)= \mathrm{d}_x \mathrm{d}_z \, \mathrm{ln}\, \theta
  \left(\boldsymbol{\mathfrak{A}}+\int_{R}^{Q}
    \boldsymbol{v};\tau\right),
  \quad Q=(x,y), R=(z,w).
\end{equation}

Such a realisation of $\Omega(Q,R)$ yields the following representation
of the holomorphic projective connection $S(P)$:

{\bf The Fay-Wirtinger representation} of $S(P)$ \cite{wir943} quoted
by Fay \cite{fay973}, p.~19, where we denote $S(P)=S_{FW}(P)$ in the
form
\begin{align}
  S_{FW}(P)=\left\{\int^{P}_{\cdot}H_{\mathfrak{A}},P\right\}(P)
  +\frac32\left( \frac{Q_{\mathfrak{A}}}{H_{\mathfrak{A}}}\right)^2(P)
  -2\frac{T_{\mathfrak{A}}}{H_{\mathfrak{A}}}(P),
\end{align}
where $\{\cdot,\cdot\}$ is the Schwartzian differential operator,
\begin{align}\begin{split}
  H_{\mathfrak{A}}(P)&=\sum_{i=1}^g\frac{\partial \theta} {\partial
    z_i} ( {\mathfrak{A}}) v_i(P), \\ Q_{\mathfrak{A}}(P) &=
  \sum_{i,j=1}^g
  \frac{\partial^2 \theta} {\partial z_i\partial z_j}
(\mathfrak{A}) v_i(P) v_j(P),\\
  T_{\mathfrak{A}}(P)& = \sum_{i,j,k=1}^g\frac{\partial^3 \theta}
  {\partial z_i\partial z_j\partial z_k} ({\mathfrak{A}}) v_i(P)
  v_j(P)v_k(P).\end{split} \label{HQT}
\end{align}
Here $\mathfrak{A}$ is a non-singular point of the $\theta$-divisor,
$\mathfrak{A}\in (\theta)$, $v_j, j=1,\ldots,g$, where the $v_j$ are
the normalized holomorphic differentials defined in (\ref{holdiff1}).

Our method is based on a comparison of the Fay-Wirtinger representation of
the projective connection $S(P)$, given above as $S_{FW}(P)$, and the
equivalent representation of Klein and Weierstrass, $S_{KW}(P)$,
described below.

{\bf The Klein-Weierstrass representation} of the projective
connection follows from the Klein-Weierstrass realisation of the
bi-differential $\Omega(Q,R)$ in algebraic form (see \cite{bak897},
\cite{bak907} and the recent review \cite{bel12}).  Namely, let $f(x,y)=0$
be the equation of the $(n,s)$-curve $\mathcal{C}$ of genus $g$ with
marked point $P_0=(\infty,\infty)$.  Then for arbitrary points
$(Q=(x,y),R=(z,w)) \in \mathcal{C} \times \mathcal{C}$, the
bi-differential $\Omega(Q,R)$ is represented in the form
\begin{align}
  \Omega(Q,R)=\frac{\mathcal{F}(Q,R)}{(x-z)^2}
  \frac{\mathrm{d}x}{f_y(x,y)}\frac{\mathrm{d}z}{f_w(z,w)}+
  2\boldsymbol{u}(Q)^T \varkappa \boldsymbol{u}(R). \label{KWOmega}
\end{align}
Here $\mathcal{F}(Q,R)$ is a polynomial of its variables and
$\boldsymbol{u}=(u_1,\ldots, u_g)^T$ is a vector of canonical
holomorphic differentials computed at points $Q$ and $R$.

In \cite{nak10}, Nakayashiki presented a description of the polynomial
${\mathcal F}(Q,R)$ as a polynomial of $(x,y;z,w)$ with the
coefficients in homogeneous polynomials (with respect to the weights)
of $\lambda_i$ (=coefficients of $f(x,y)$). Our approach is based on
the explicit algebraic expression of $\mathcal{F}(Q,R)$, whose
derivation is classically known and described as follows.

Let $w_1<w_2<\ldots<w_g$ be the Weierstrass gap sequence at the point
$P_0$ of length $g$.  Order the components of the vector
$\boldsymbol{u}$ of holomorphic differentials in such a way that the
orders of vanishing at the point $P_0$ of the holomorphic integrals
are $\mathrm{ord} \left( \int u_k\right)=w_{g-k+1}$.  Represent the
basis of canonical holomorphic differentials in the form
\begin{equation}
  \boldsymbol{u}(x,y) = \frac{\mathrm{d}x}{f_y(x,y)}
  \boldsymbol{\mathcal{U}}(x,y),\label{holldiff}
\end{equation}
with vector $\boldsymbol{\mathcal{U}}(x,y)=(\mathcal{U}_1(x,y),\ldots,
\mathcal{U}_g(x,y))^T$, whose components are monomials constructed by
the Weierstrass gap sequence.  Introduce the vector
$\boldsymbol{\mathcal{R}}(x,y)=(\mathcal{R}_1(x,y), \ldots,
\mathcal{R}_g(x,y))^T$ defining the conjugate meromorphic differential
with the only pole at the point $P_0$.
\begin{equation}
  \boldsymbol{r}(x,y)= \frac{\mathrm{d}x}{f_y(x,y)}
  \boldsymbol{\mathcal{R}}(x,y).
\end{equation}
The vector $\boldsymbol{\mathcal{R}}(x,y)$ is constructed in such a
way that the period matrices
\begin{align*}
2\omega & = \left(\oint_{\mathfrak{a}_j}u_i\right)_{i,j=1,\ldots,g},
\quad 2\omega' = \left(\oint_{\mathfrak{b}_j}u_i\right)_{i,j=1,\ldots,g},\\
2\eta &= -\left(\oint_{\mathfrak{a}_j}r_i\right)_{i,j=1,\ldots,g},
\quad 2\eta' = -\left(\oint_{\mathfrak{b}_j}r_i\right)_{i,j=1,\ldots,g}
\end{align*}
satisfy the generalized Legendre relation
\begin{equation}
\left(\begin{array}{cc}
\omega&\omega'\\ \eta& \eta'
\end{array}\right)
\left(  \begin{array} {cc} 0_g&-1_g\\
    1_g&0_g\end{array}\right)  \left(\begin{array}{cc}
    \omega&\omega'\\
    \eta& \eta'  \end{array}\right)^T=-\frac{\imath \pi}{2}
\left(  \begin{array} {cc} 0_g&-1_g\\
    1_g&0_g\end{array}\right), \label{genlegendre}
\end{equation}
which was given in expanded form in (\ref{legendre}).

The polynomial ${\mathcal F}(Q,R)={\mathcal F}(x,y;z,w)$ and vector
$\boldsymbol{\mathcal{R}}(x,y)$ are found simultaneously within the
following construction.  Introduce vectors $\boldsymbol{\phi}(x,y)$
and $\boldsymbol{\psi}(x,y)$
\begin{align}
\boldsymbol{\phi}(x,y)=(y^{n-1},y^{n-2},\ldots,1)^T
\end{align}
and
\begin{align}
\boldsymbol{\psi}(x,y)=(1,\psi_1(x,y),\ldots,\psi_{n-1}(x,y))^T,
\end{align}
where
\[
\psi_k(x,y) =\left( \frac{f(x,y)}{y^{n-k}} \right)_+,  \quad
k=1,\ldots n-1
\]
and the subscript $+$ means that, after division, only monomials of
non-negative power in $y$ are taken into account in the sum.

Introduce the differential of the third kind $\Pi_{P_1,P_2}(P)$,
$P=(x,y)$ with first order poles at two points $P=P_1=(x_1,y_1)$,
$P_2=(x_2,y_2)$, and residues $\pm1$.  It is given, in terms of the
quantities introduced above, as
\begin{align}
\Pi_{P_1,P_2}(P)=\frac{\mathrm{d} x}{f_y(x,y)}\left\{
  \frac{\boldsymbol{\psi}^T(x_1,y_1) \boldsymbol{\phi}(x,y)}{x-x_1}   -
\frac{\boldsymbol{\psi}^T(x_2,y_2) \boldsymbol{\phi}(x,y)}{x-x_2}
 \right\}.
\end{align}
To construct the bi-differential $\Omega(Q,R)$ explicitly, we consider
on $\mathcal{C}\times \mathcal{C}$ the auxiliary bi-differential
\begin{equation}
  \widetilde{\Omega}(Q,R) = \mathrm{d}z\frac{\mathrm{d} x} {f_y(x,y)}
  \frac{\partial}{\partial z} \frac{\boldsymbol{\psi}^T(z,w)
    \boldsymbol{\phi}(x,y)}{x-z}, \quad Q=(x,y),\, R=(z,w). \label{auxillary}
\end{equation}
It is straightforward to show that
\[
\boldsymbol{\phi}(x,y)^T \boldsymbol{\psi}(x,Y)=
\frac{f(x,Y)-f(x,y)}{Y-y}   ,
\]
therefore $\widetilde{\Omega}(Q,R)$ has a pole of the second order
along the diagonal as a form in $x$.  Since this differential is
holomorphic in $(x,y)$ away from the diagonal, it has poles in the
variable $(z,w)$ at $z=\infty$.  Restore the symmetry by setting
\[
\widetilde{\widetilde{\Omega}}(Q,R)=\widetilde{\Omega}(Q,R)
+\boldsymbol{\mathcal{R}}(z,w)^T \boldsymbol{\mathcal{U}}(x,y)
\frac{\mathrm{d}x\mathrm{d}z}{f_y(x,y) f_w(z,w)},
\]
where the $g$-vector $\boldsymbol{\mathcal{U}}(x,y)$ is defined by the
holomorphic differentials (\ref{holldiff}), and the $g$-vector
$\boldsymbol{\mathcal{R}}(z,w)$ is found from the condition
\begin{align}\begin{split}
    &\frac{\partial f(z,w)}{\partial w} \frac{\partial }{\partial z}
    \frac{\boldsymbol{\psi}(z,w)^T \boldsymbol{\phi}(x,y)}{x-z}
    -\frac{\partial f(x,y)}{\partial y} \frac{\partial }{\partial x}
   \frac{\boldsymbol{\psi}(x,y)^T \boldsymbol{\phi}(z,w)}{z-x}\\
    &\hskip2cm
    =\boldsymbol{\mathcal{R}}^T(x,y)\boldsymbol{\mathcal{U}}(z,w)-
    \boldsymbol{\mathcal{R}}^T(z,w)\boldsymbol{\mathcal{U}}(x,y).
\end{split}\label{symmetry1}
\end{align}

As a result, the polynomial ${\mathcal F}(x,y;z,w)$ is found from the
relation
\begin{align*}
  \frac{\mathrm{d} x\mathrm{d}z} {f_y(x,y)}\frac{\partial}{\partial z}
  \frac{\boldsymbol{\psi}^T(z,w) \boldsymbol{\phi}(x,y)}{x-z}
 & +\boldsymbol{\mathcal{R}}(z,w)^T \boldsymbol{\mathcal{U}}(x,y)
  \frac{\mathrm{d}x\mathrm{d}z}{f_y(x,y)
    f_w(z,w)}\\ & =\frac{\mathcal{F}(x,y;z,w)}{(x-z)^2}
  \frac{\mathrm{d}x}{f_y(x,y)}\frac{\mathrm{d}z}{f_w(z,w)},
\end{align*}
and the bi-differential $\Omega(Q,R)$ is represented in the form
\begin{align}
\begin{split}
  \Omega(Q,R)&=\widetilde{\Omega}(Q,R) +
  \boldsymbol{\mathcal{R}}(z,w)^T \boldsymbol{\mathcal{U}}(x,y)
  \frac{\mathrm{d}x\mathrm{d}z}{f_y(x,y) f_w(z,w)}\\ &
  +2\boldsymbol{u}^T(Q)\varkappa\boldsymbol{u}(R).
\end{split}\label{omega3}
\end{align}

Explicit formulae for the polynomials $\mathcal{R}_j(x,y)$ are
classically known for hyperelliptic curves \cite{bak897} and found
only recently for the family of $(3,s)$ - trigonal curves (see review
\cite{bel12} where all necessary references are given).  We note that
the polynomials $\mathcal{R}_j(x,y)$ are not uniquely defined, and an
arbitrary polynomial built in $\mathcal{U}_j(x,y)$ can be added
without affecting the generalized Legendre condition
(\ref{genlegendre}).

The normalizing matrix $\varkappa$ is given according to the
construction as the symmetric matrix
\begin{equation}
  \varkappa^T=\varkappa, \quad \text{and} \quad
\varkappa= \eta (2\omega)^{-1}.
\end{equation}
The period matrices $2\eta$ and $2\eta'$ are expressible in terms of
$\varkappa$ and $\omega$, $\omega'$:
\begin{equation}
  \eta = 2\varkappa \omega, \qquad  \eta'=2 \varkappa \omega'
  -\frac{\imath \pi}{2}(\omega^{-1})^T\label{etaeta}.
\end{equation}

Now we are in a position to define the fundamental $\sigma$-function
of the $(n,s)$ curve $\mathcal{C}$ of genus $g=(n-1)(s-1)/2$.  To do
that we introduce the $\theta$-function
$\theta[\varepsilon](\boldsymbol{z};\tau)$ with characteristic
\[
[\varepsilon]= \left[   \begin{array}{c}  \boldsymbol{\varepsilon}^T\\
    \boldsymbol{\varepsilon'}^T\end{array} \right] =
\left[\begin{array}{ccc}
    \varepsilon_1&\ldots&\varepsilon_g \\
    \varepsilon_1'&\ldots&\varepsilon_g'
\end{array} \right]
\]
with half-integer $\varepsilon_i,\varepsilon_j'= 0$ or $1/2$ as the
Fourier series
\begin{align}\begin{split}
  \theta[\varepsilon](\boldsymbol{z};\tau)
  &=\mathrm{exp}\left\{  \imath\pi \boldsymbol{\varepsilon}^T\tau
\boldsymbol{\varepsilon} + 2\boldsymbol{\varepsilon}^T(\boldsymbol{z} +
  \boldsymbol{\varepsilon}') \right\}
\theta(\boldsymbol{z+\tau \boldsymbol{\varepsilon}
+\boldsymbol{\varepsilon}'})\\
  &=\sum_{\boldsymbol{n}\in
    \mathbb{Z}^g}\mathrm{exp} \left\{
    \imath\pi(\boldsymbol{n}+\boldsymbol{\varepsilon})^T\tau
    (\boldsymbol{n}+\boldsymbol{\varepsilon}) +2\imath\pi
    (\boldsymbol{n}+\boldsymbol{\varepsilon})^T
    (\boldsymbol{z}+\boldsymbol{\varepsilon'})\right\}
\end{split} \label{thetachar}
\end{align}
Here the Riemann matrix $\tau=\omega'/\omega$ necessarily belongs to
the Siegel upper-half space, i.e. $\tau^T=\tau$ and
$\mathrm{Im}\tau>0$. The $\theta$-function with characteristic
$[\varepsilon]$ is an even or odd function of $\boldsymbol{z}$
whenever $4\boldsymbol{\varepsilon}^T \boldsymbol{\varepsilon'}=0 \;
\mathrm{mod}\; 2$ or $4\boldsymbol{\varepsilon}^T
\boldsymbol{\varepsilon'}=1 \; \mathrm{mod}\; 2$.

The vector  of Riemann constants $\boldsymbol{K}_{P_0}$   with the
base point at $P_0=(\infty,\infty)$ is defined by the condition
\begin{equation}
\theta(\boldsymbol{\mathcal{A}}(D)+\boldsymbol{K}_{P_0})\equiv 0
\end{equation}
for all divisors linearly equivalent to divisors $D$ of degree $g-1$,
$D=P_1+\ldots+P_{g-1}$, where $\boldsymbol{\mathcal{A}}(D)$ is the
Abel map with base point $P_0$,
\[
\boldsymbol{\mathcal{A}}(D) =\sum_{k=1}^{g-1} \int_{P_0}^{P_k}
\boldsymbol{v}, \qquad \boldsymbol{v}=(2\omega)^{-1}\boldsymbol{u} .
\]
The vector of Riemann constants with base point $P_0$ is a half-period
if and only if there exists a holomorphic differential with zero of
order $2g-2$ at the point $P_0$, see e.g. \cite{fk980}, p.~299.
Remarkably, the $(n,s)$-curve admits such a differential \cite{nak10},
namely the differential $u_1=\mathrm{d}x/f_y(x,y)$ that has a zero of
required order in the point $P_0=(\infty,\infty)$.  Indeed,
\[
\left.\mathrm{d}x/f_y(x,y)\right|_{x=1/\xi^n,y=1/\xi^s} \sim
\xi^{ns-n-s-1}\mathrm{d}\xi=\xi^{2g-2}\mathrm{d}\xi,
\]
where we use the expression for the genus of the $(n,s)$-curve,
$g=(n-1)(s-1)/2$. Therefore, the divisor $(g-1)P_0$ defines a spin
structure, and the corresponding half-integer $\theta$-characteristic
$\gamma$ defines a $\sigma$-function, which is called the {\it
  fundamental $\sigma$-function of the $(n,s)$ curve}
\begin{align}
  \sigma(\boldsymbol{z})=C \theta[\gamma]
  ((2\omega)^{-1} \boldsymbol{z};\tau )\mathrm{exp}
  \left\{\boldsymbol{z}^T\varkappa
    \boldsymbol{z}\right\},\label{sigmafund}
\end{align}
where $\boldsymbol{z}$ is a vector from the Jacobi variety
$\mathrm{Jac}(\mathcal{C})=\mathbb{C}^g / 2\omega\otimes 2\omega'$,
$C$ is a constant independent of the variable $\boldsymbol{z}$; for
details and other definition of $\sigma(\boldsymbol{z})$ see, e.g.\
\cite{bel12}.  According to Klein \cite{kle890}, the quadratic form in
the exponential $\sum_{i,j} \varkappa_{i,j} z_i z_j$ should be
presented in terms of $\theta$-constants; such an expression in terms
of logarithmic derivatives of even $\theta$-constants, $\partial
\mathrm{ln}\, \theta[\varepsilon]/ \partial \tau_{i,j}$, was given in
\cite{kle890}.

We remark that the characteristic $[\gamma]$ can be odd and even, and
the corresponding $\sigma$-function inherits this parity as a function
of $\boldsymbol{z}$.  This parity coincides with parity of the integer
number $(n^2-1)(s^2-1)/24$, see \cite{bel999a}.  This last number
represents the order of vanishing of $\sigma(\boldsymbol{z})$ in
$\mathrm{Jac}(\mathcal{C})$ over the variable $\xi$, when components
of the vector $\boldsymbol{z}$ are graded as $z_k=\xi^{w_{g-k+1}}$,
and $(w_1,\ldots,w_g)$ are the Weierstrass gap numbers at the point
$P_0=(\infty,\infty)$.

It is also useful to mention that, since $(2g-2)P_0$ belongs to the
canonical class, it also follows \cite{ksh12} that the dimension of
the moduli space of algebraic curves that can be represented as
$(n,s)$-curves is equal to $2g-1$, that is the variety of such curves
has co-dimension $g-2$ in the space of moduli of all curves. In
\cite{ksh12}, the theory is constructed for arbitrary curves, whilst
we concentrate here on the moduli problem of $\sigma$-functions of
$(n,s)$-curves, finding various representations of the matrix
$\varkappa$ in terms of $\theta$-constants.  The following result is
important for the development of our argument.

\begin{prop}
Let the $(n,s)$ curve be given in the form
\begin{equation}
y^n-a_{n-1}(x)y^{n-1}-\ldots - a_1(x)y - a_{0}(x)=0.\label{nscurve2}
\end{equation}
The projective connection $S_{KW}(x,y)$ at the point $P=(x,y)\in
  \mathcal{C}$ and with the local coordinate $\xi(P)$ is given as
\begin{align}\begin{split}
    S_{KW}(x,y) = & \left\{x,\xi\right\}\mathrm{d}\xi^2+\mathcal{T}(x,y)\\
    & +6\boldsymbol{r}^T(x,y) \boldsymbol{u}(x,y)
    +12\boldsymbol{u}^T(x,y)\varkappa\boldsymbol{u}(x,y),
\end{split}
\label{SHGEN}
\end{align}
where
\begin{equation}
\mathcal{T}(x,y)=-\frac{1}{2f_y}\left[3y''f_{yy}+2{y'}^2f_{yyy} +6y'f_{yyx} +
      6f_{yxx}  \right]\mathrm{d}x^2.\label{tcal}
\end{equation}
Here the subscripts mean partial differentiation, the primes ${}'$
mean implicit differentiation of $y$ as a function of $x$ given by the
equation of the curve, $f(x,y)=0$ and $\{\cdot,\cdot\}$ is the
Schwartzian derivative,
\[
\{x(\xi),\xi\} = \displaystyle{\frac{\mathrm{d}^3 x(\xi)
    /\mathrm{d}\xi^3}
{\mathrm{d} x(\xi) /\mathrm{d}\xi} } -
\frac32\left (\displaystyle{\frac{\mathrm{d}^2 x(\xi) /
      \mathrm{d}\xi^2} { \mathrm{d} x(\xi) /\mathrm{d}\xi}
  }\right)^2.
\]
\end{prop}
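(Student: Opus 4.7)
The plan is to read off $S_{KW}(P)$ by expanding the algebraic Klein--Weierstrass bi-differential $\Omega(Q,R)$ along the diagonal and matching against the defining expansion~(\ref{expansion}). Start from the decomposition~(\ref{omega3}),
\[
\Omega(Q,R)=\widetilde{\Omega}(Q,R)+\boldsymbol{\mathcal{R}}^T(z,w)\boldsymbol{\mathcal{U}}(x,y)\frac{\mathrm{d}x\,\mathrm{d}z}{f_y(x,y)f_w(z,w)}+2\boldsymbol{u}^T(Q)\varkappa\boldsymbol{u}(R).
\]
The last two terms are regular on the diagonal and evaluate there to $\boldsymbol{r}^T(x,y)\boldsymbol{u}(x,y)$ and $2\boldsymbol{u}^T(x,y)\varkappa\boldsymbol{u}(x,y)$ respectively (the first via $\boldsymbol{r}=\mathrm{d}x\boldsymbol{\mathcal{R}}/f_y$ and analogously for $\boldsymbol{u}$). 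Since~(\ref{expansion}) identifies the regular diagonal part as $\tfrac{1}{6}S(P)$, multiplying by $6$ gives the two holomorphic contributions $6\boldsymbol{r}^T\boldsymbol{u}+12\boldsymbol{u}^T\varkappa\boldsymbol{u}$ in~(\ref{SHGEN}).

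The substantive work is to expand $\widetilde{\Omega}$ along the diagonal. Writing $G(z,w;x,y)=\boldsymbol{\psi}^T(z,w)\boldsymbol{\phi}(x,y)$ and performing the $\partial_z$ along the curve branch with $w\to y$ as $z\to x$ gives
\[
\widetilde{\Omega}/(\mathrm{d}x\,\mathrm{d}z)=\frac{1}{f_y(x,y)}\Bigl[\frac{G}{(x-z)^2}-\frac{G_z+y'G_w}{x-z}\Bigr].
\]
I then Taylor-expand both numerators in the parameter $t=z-x$, using $w=y+y't+\tfrac{1}{2}y''t^2+\cdots$ on the curve. The key inputs are the identity stated in the text, $\boldsymbol{\phi}^T(x,y)\boldsymbol{\psi}(x,Y)=(f(x,Y)-f(x,y))/(Y-y)$, together with its differentiations in $x$ and $Y$ evaluated at $Y=y$; these yield the diagonal values $G|_{\mathrm{diag}}=f_y$, $G_z|_{\mathrm{diag}}=f_{xy}$, $G_w|_{\mathrm{diag}}=\tfrac{1}{2}f_{yy}$, $G_{zz}|_{\mathrm{diag}}=f_{xxy}$, $G_{zw}|_{\mathrm{diag}}=\tfrac{1}{2}f_{xyy}$, $G_{ww}|_{\mathrm{diag}}=\tfrac{1}{3}f_{yyy}$. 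Substituting, the formal simple poles in $1/t$ cancel identically (as they must, because $\Omega$ has only a double pole on the diagonal), and the constant-in-$t$ part equals
\[
-\frac{1}{2f_y}\Bigl[f_{xxy}+y'f_{xyy}+\tfrac{1}{3}y'^2f_{yyy}+\tfrac{1}{2}y''f_{yy}\Bigr].
\]
Viewing this as the quadratic differential $(\cdots)\mathrm{d}x^2$ and multiplying by $6$ reproduces $\mathcal{T}(x,y)$ in~(\ref{tcal}) after using $f_{yxx}=f_{xxy}$ and $f_{yyx}=f_{xyy}$.

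The remaining piece is the double-pole part $\mathrm{d}x\,\mathrm{d}z/(x-z)^2$ of $\widetilde{\Omega}$. Expanding in the local coordinate $\xi$ (with $x=x(\xi)$, $z=x(\zeta)$) by the standard Taylor computation gives
\[
\frac{\mathrm{d}x\,\mathrm{d}z}{(x-z)^2}=\frac{\mathrm{d}\xi\,\mathrm{d}\zeta}{(\xi-\zeta)^2}+\frac{1}{6}\{x,\xi\}\,\mathrm{d}\xi\,\mathrm{d}\zeta+O(\xi-\zeta),
\]
and after multiplication by~$6$ the regular diagonal piece contributes the Schwartzian term $\{x,\xi\}\mathrm{d}\xi^2$ of~(\ref{SHGEN}). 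Summing all contributions yields $S_{KW}$. The principal obstacle is the bookkeeping of the Taylor expansion of $G$ along the curve: one must be careful to differentiate the $\boldsymbol{\phi}^T\boldsymbol{\psi}$-identity in both arguments, to use $w=y+y't+\tfrac12y''t^2+\cdots$ consistently, and to verify the cancellation of the $1/t$ terms, which is the crucial consistency check that singles out exactly the combination $\mathcal{T}(x,y)$ claimed in~(\ref{tcal}).
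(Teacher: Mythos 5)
Your argument is correct, but it takes a genuinely different --- and in fact stronger --- route than the paper. The paper's own proof is a direct case-by-case computation: it fixes $n=2,3,\ldots$ up to $n=10$, computes $\mathcal{T}$ for each case, and states explicitly that the general formula (\ref{tcal}) is extracted by inspecting these cases and remains a conjecture for general $n$. Your decomposition via (\ref{omega3}) and the treatment of the two regular terms coincide with the paper's, but you compute $\mathcal{T}$ once and for all: substituting $x\to z$, $Y\to w$ in the stated polar identity (and noting $\boldsymbol{\phi}$ has no explicit $x$-dependence) gives $G(z,w;x,y)=\boldsymbol{\psi}^T(z,w)\boldsymbol{\phi}(x,y)=\bigl(f(z,w)-f(z,y)\bigr)/(w-y)$ identically, so all the diagonal derivatives you list ($G=f_y$, $G_w=\tfrac12 f_{yy}$, $G_{ww}=\tfrac13 f_{yyy}$, $G_z=f_{xy}$, $G_{zw}=\tfrac12 f_{xyy}$, $G_{zz}=f_{xxy}$ on the diagonal) follow from the Taylor expansion of $f$ in its second argument; the simple poles cancel and the constant term $-\tfrac{1}{2f_y}\bigl(G_{zz}+2y'G_{zw}+y'^2G_{ww}+y''G_w\bigr)$, multiplied by $6$, is exactly (\ref{tcal}). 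I checked these values and the final combination; they are right. Two small points: your intermediate display should carry $+\,(G_z+y'G_w)/(x-z)$ rather than $-$, since $\partial_z(x-z)^{-1}=+(x-z)^{-2}$, and your own cancellation of the $1/t$ terms requires the plus sign; and you are right to keep the factor $\tfrac16$ in front of $\{x,\xi\}$ and then multiply the whole regular diagonal part by $6$ --- the paper's displayed expansion of $\widetilde{\Omega}$ omits these $\tfrac16$'s, a harmless normalization slip there. The payoff of your approach is substantial: written out carefully, it removes the restriction to $n\le 10$ and establishes the proposition for all $(n,s)$-curves, precisely the general case the paper leaves open.
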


\begin{proof} Currently we have only a proof by direct calculation for
  $(n,s)$ curves up to and including $n=10$, a more general proof is
  under investigation.  Consider the representation (\ref{omega3}) of
  the bi-differential $\Omega(Q,R)$.  The last two terms in
  (\ref{omega3}), after restriction to the diagonal $Q=R$, produce the
  last two terms (after multiplication by 6) in (\ref{SHGEN}).  The
  expansion of $\widetilde{\Omega}(Q,R) $ is of the form
$$
\widetilde{\Omega}(Q,R)=\frac{\mathrm{d}\xi(Q) \mathrm{d}{\xi}(R)}{ (
  \xi(Q)-\xi(R) )^2} + \{ x(\xi),\xi \}
\mathrm{d}\xi(Q)\mathrm{d}\xi(R) +\mathcal{T}(Q,R),
$$
where the quantity $\mathcal{T}(Q,R)$, when restricted to the
diagonal, $Q=R=P$, should be shown to be of the form (\ref{tcal}).
This can be done by direct calculation in the following way. Consider
successive cases $n=2,3,\ldots$ fixing the $(n,s)$-curve in the form
(\ref{nscurve2}).  We get:
\begin{align*}
  n=2:&\quad f(x,y)=y^2-a_1(x)y-a_0(x)=0,\\
&  \mathcal{T}=- \frac{3}{f_y} (y''-a_1'')  \mathrm{d} x^2,\\
  n=3:&\quad f(x,y)=y^3-a_2(x)y^2-a_1(x)y-a_0(x)=0, \\
&\mathcal{T}=-\frac{3}{f_y} \left\{ (3y-a_2)y''+2{y'}^2-2a_2'y'-a_1''
  \right\} \mathrm{d} x^2,\\
  n=4:&\quad   f(x,y)=y^4-a_3(x)y^3-a_2(x)y^2-a_1(x)y-a_0(x)=0,\\
&\mathcal{T}=-\frac{3}{f_y}
  \left\{(6y^2-3ya_3-a_2)y''+(8y-2a_3){y'}^2-(6ya_3'+2a_1')y'\right.\\
&\hskip1.5cm\left.-3y^2a_3'' -2ya_2''-a_1''\right\} \mathrm{d} x^2,\\
  n=5:&\quad f(x,y)=y^5-\sum_{k=0}^4 a_k(x)y^k=0,
\\& \mathcal{T}=-\frac{3}{f_y} \left\{(10y^3-6a_4y^2-3a_3y-a_2)y''
+(20y^2-8a_4y-2a_3){y'}^2\right. \\
  &\hskip1.5cm \left.   -(12a_4y^2+6ya_3'+2a_2')y'-4a_4''y^3
    -3y^2a_3''-2ya_2''-a_1''\right\}
\mathrm{d} x^2. \\
  &\qquad \vdots \qquad \qquad \qquad \vdots\qquad \qquad \qquad
  \vdots\qquad \qquad \qquad \vdots\\
\end{align*}
Analysing these formulae, we find the general representation of the
term $\mathcal{T}(P)$ given in the statement of the proposition. In
the general case this result is currently just a conjecture, and a
general proof is in the process of development.
\end{proof}

The procedure for the derivation of relations for periods of the
differentials of the second kind is based on the uniqueness of the
normalized bi-differential $\Omega(Q,R)$ and is described as follows.
Let $\mathcal{C}$ be a $(n,s)$-curve.  Let $P$ be a point in the
vicinity of $P_0=(\infty,\infty)$ where the local coordinate $\xi(P)$
is given as $x=1/\xi^{n}$.  Let $\mathfrak{A}$ be a non-singular point
of the $\theta$-divisor, $\mathfrak{A}\in (\theta)$, satisfying
the condition
\begin{equation}
  H_{\boldsymbol{\mathfrak{A}}} (P_0) \neq 0.\label{condition}
\end{equation}
Expanding both parts of the equivalence
\begin{equation}
  S_{FW}(P) \equiv S_{KW}(P)\label{master}
\end{equation}
in $\xi(P)$, and equating coefficients of corresponding powers, we
obtain relations involving matrix elements of $\varkappa$ and
$\theta$-derivatives at the point $\mathfrak{A}$.  In this way we
obtain a compatible system of linear equations for the
$\varkappa_{i,j}$.

{\bf Remark} The Weierstrass formula (\ref{Weierstrass1}) can be
easily obtained by expanding both sides of the Weierstrass
representation of the $\sigma$-function in terms of the Jacobian
$\theta$-function using the Weierstrass series for $\sigma(u)$ with
coefficients given recursively.  This method can be generalized to
higher genera curves in the cases where $\sigma$-expansions are
known.  But only isolated cases of such expansions are elaborated -
the Buchstaber-Leykin recursion for genus two $\sigma$-functions
\cite{bl05}, calculation of first few terms of $\sigma$-expansions of
$(3,4)$-curve presented in \cite{eemop08}, $(3,5)$ \cite{bego08},
$(3,7)$ and $(3,8)$-curves are given in \cite{eng09} and $(4,5)$-curve
in \cite{ee09}.  We see an advantage of the method proposed, since
the derivation is reduced to examining of series over one complex
variable, the local coordinate, whilst the generalization of the
Weierstrass method leads to a series in many variables.

In the next section, we will demonstrate how the approach works in the
case of the hyperelliptic curve.

\section{Hyperelliptic curve}
We realise the hyperelliptic curve $\mathcal{C}$ of genus $g$ in the
form
\begin{equation}
  y^2=4x^{2g+1}+\lambda_{2g}x^{2g} \lambda_{2g-1}\lambda_{2g} +\ldots
+ \lambda_0,\quad \lambda_i\in\mathbb{C}.    \label{hypcurve}
\end{equation}
The basis of holomorphic and meromorphic differentials can be fixed as
\begin{align}\begin{split}
    u_i&= \frac{x^{i-1}}{y}\mathrm{d}x,\quad i=1,\ldots,g,\\
    r_j&=\sum_{k=j}^{2g+1-j}(k+1-j)\lambda_{k+1+j}
    \frac{x^{k}\mathrm{d}x}{4y}, \quad j=1,\ldots,g.
\end{split} \label{bakerbasis}
\end{align}
The matrices of their $\mathfrak{a}$ and $\mathfrak{b}$-periods,
$2\omega,2\eta$ and $2\omega',2\eta'$ satisfy the generalized Legendre
relation (\ref{genlegendre}).  As we already noted, the meromorphic
differentials $r_i$ are not uniquely determined and a linear
combination of holomorphic differentials can be added without
violation of the relation (\ref{genlegendre}).  This basis in the
space of meromorphic differentials was introduced by Baker
\cite{bak897} and we will call it the {\it Baker basis.}

The normalized canonical bi-differential has the form
\begin{align}\begin{split}
    \Omega (x,y; z,w) = \frac{2y w+F (x,z) }{ 4 (x-z) ^2} \frac{
      \mathrm{ d}x}{ y} \frac{ \mathrm{ d}z}{w} +2 \sum_{i,j=1}^g
    x^{i-1}z^{j-1}\varkappa_{i,j} \frac{ \mathrm{ d}x}{ y} \frac{
      \mathrm{ d}z}{w}.
  \end{split} \label{omega1}
\end{align}
Here $F(x,z)$ is the {\it Kleinian 2-polar}, with $F(x,x)=2y^2$, given by
the formula
\begin{align}
  F(x,z)= \sum_{k=0}^g x^kz^k(2 \lambda_{2k}+ \lambda_{2k+1}(x+z) ).
\end{align}
The symmetric matrix
\begin{align}
  \varkappa =\left( \begin{array}{ccc}
      \varkappa_{1,1}& \ldots& \varkappa_{1,g}  \\
      \vdots&&\vdots\\
      \varkappa_{1,g}& \ldots& \varkappa_{g,g}
\end{array}\right)= \eta    (2\omega)^{-1},
\end{align}
and therefore the period matrices of differentials of the second kind
$2\eta,2\eta'$, are computed by the known $\varkappa$ as in
(\ref{etaeta}).

When the bi-differential is found in the form (\ref{omega1}), then the
fundamental $\sigma$-function can be defined as in (\ref{sigmafund}),
and the Klein-Weierstrass $\wp$-functions introduced,
\begin{align}\begin{split}
    &\wp_{ij}(\boldsymbol{z})=-\frac{\partial^2}{\partial z_i\partial
      z_j}
    \, \mathrm{ln}\, \sigma(\boldsymbol{z}), \quad i,j, = 1,\ldots, g,\\
    &\wp_{ijk}(\boldsymbol{z})=-\frac{\partial^3}{\partial
      z_i\partial z_j \partial z_k} \, \mathrm{ln}\,
    \sigma(\boldsymbol{z}), \quad i,j, k = 1,\ldots, g, \quad \text{
      etc}. \end{split} \label{wp}
\end{align}

The solution of the the Jacobi inversion problem found by Baker (see
\cite{bel12}) looks remarkable simple in the $\wp$-variables:

\begin{align}\begin{split}
    &x^g-\wp_{gg}(\boldsymbol{z}) x^{g-1}
    -\wp_{g,g-1}(\boldsymbol{z})
    x^{g-2} -\ldots - \wp_{1g}(\boldsymbol{z})=0,\\
    &y_k=\wp_{ggg}(\boldsymbol{z})x_k^{g-1}
    +\wp_{g-1,gg}(\boldsymbol{z}) x_k^{g-2}+ \ldots
    +\wp_{2,gg}(\boldsymbol{z})x_k +\wp_{1,gg}(\boldsymbol{z}),\\
    &\qquad \qquad k=1,\ldots,g.
\end{split} \label{JIP}
\end{align}

\begin{prop}
  Let $\xi(P)$ be the local coordinate of the point $P$.  The
  projective connection is given by the formula
\begin{align}
\begin{split}
  S_{KW}(x,y)&=\{x(\xi),\xi\}\mathrm{d}\xi^2
  -\frac32\frac{y''(x)}{y(x)}\mathrm{d}x^2\\
  & +6\boldsymbol{u}^T(x,y)\boldsymbol{r}(x,y)
  +12\boldsymbol{u}^T(x,y)\varkappa\boldsymbol{u}(x,y).
\end{split}\label{SH}
\end{align}
\end{prop}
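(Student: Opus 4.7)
The plan is to derive (\ref{SH}) as a direct specialization of the general formula (\ref{SHGEN}) to the case $n=2$. First I would identify the hyperelliptic model (\ref{hypcurve}) as a $(2,2g+1)$-curve in the sense of (\ref{nscurve2}): writing the defining equation as $y^2 - a_1(x)y - a_0(x) = 0$, one has $a_1(x)\equiv 0$ and $a_0(x) = 4x^{2g+1} + \lambda_{2g}x^{2g} + \ldots + \lambda_0$. The $n=2$ row displayed in the proof of the preceding proposition then gives
\[
\mathcal{T}(x,y) \;=\; -\frac{3}{f_y}\bigl(y'' - a_1''\bigr)\,\mathrm{d}x^2.
\]

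Second, I would substitute $a_1''\equiv 0$ together with $f_y = 2y$ to get
\[
\mathcal{T}(x,y) \;=\; -\frac{3}{2y}\,y''(x)\,\mathrm{d}x^2 \;=\; -\frac{3}{2}\,\frac{y''(x)}{y(x)}\,\mathrm{d}x^2,
\]
and plug this into (\ref{SHGEN}). The remaining two terms $6\,\boldsymbol{r}^T\boldsymbol{u} + 12\,\boldsymbol{u}^T\varkappa\boldsymbol{u}$ of (\ref{SHGEN}) are identified with the corresponding contributions in (\ref{SH}) by taking $\boldsymbol{u},\boldsymbol{r}$ to be the Baker basis (\ref{bakerbasis}) and $\varkappa = \eta(2\omega)^{-1}$ the symmetric normalizing matrix of (\ref{omega1}). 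This produces (\ref{SH}) verbatim.

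The only subtlety lies in checking that the Baker differentials (\ref{bakerbasis}) are compatible with the bi-differential expansion underlying (\ref{SHGEN}). Since the vector $\boldsymbol{\mathcal{R}}$ produced by the recipe (\ref{symmetry1}) is defined only up to holomorphic corrections (which leave the Legendre relation (\ref{genlegendre}) intact), one must verify that the Baker choice corresponds precisely to the Kleinian 2-polar $F(x,z)$ of (\ref{omega1}). Equivalently, one expands the first summand of (\ref{omega1}) along the diagonal and confirms that the non-principal part reproduces $6\,\boldsymbol{u}^T\boldsymbol{r}$ with $\boldsymbol{r}$ as in (\ref{bakerbasis}); this is a standard computation going back to Baker \cite{bak897}. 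Apart from this identification, the only dependence on the preceding proposition is through its $n=2$ case, which is explicitly verified in its proof, so no conjectural step is inherited.
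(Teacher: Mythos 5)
Your proposal is correct and takes essentially the same route as the paper, whose own proof simply states that (\ref{SH}) is the particular case of (\ref{SHGEN}); you have filled in the specialization explicitly ($a_1\equiv 0$, $f_y=2y$ in the $n=2$ formula for $\mathcal{T}$) and rightly noted that only the explicitly verified $n=2$ case of the preceding proposition is used, so no conjectural step is inherited.
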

\begin{proof}
  The formula (\ref{SH}) represents a particular case of (\ref{SHGEN})
  or can be derived in an analogous way.
\end{proof}

Consider some further special cases.

\subsection{Elliptic curve} As already mentioned, the formula for
$\varkappa=\eta/2\omega$ in the case of elliptic curve can be obtained
by the expansion of $\sigma(u)$.  But we will demonstrate how the method
works for the elliptic curve
\begin{equation}
  y^2=4x^3+\lambda_2x^2+\lambda_1x+\lambda_0
\end{equation}
and the infinite point $P_0$, where the local coordinate of a point
$P$ is introduced as $x=1/\xi^2\sim \infty$.
\begin{align*}
  S_{KW}(P)/(\mathrm{d}\xi)^2\sim -\frac34\lambda_2+12 \varkappa+\left(
    -\frac32\lambda_1+\frac{9}{32}\lambda_2^2-3\varkappa \lambda_2
  \right)\xi^2+O(\xi^4).
\end{align*}
To expand $S_{FW}(P)$, we choose $\mathfrak{A}=1/2+\tau/2 \in (\theta)$
and write the expression for $S_{FW}$ in terms of Jacobian
$\theta$-functions. We have for the quantities (\ref{HQT})
\[
H_{\mathfrak{A}}(P) = -\frac{\mathrm{d}\xi}{2\omega}\vartheta'_1(0)
\mathrm{e}^{-\imath\pi\tau/4}, \quad Q_{\mathfrak{A}}(P)=0,\quad
T_{\mathfrak{A}}(P) =
-\frac{\mathrm{d}\xi}{8\omega^3}\vartheta'''_1(0)
\mathrm{e}^{-\imath\pi\tau/4} .
\]
Taking this into account, we write
\begin{align*}
  S_{FW}/(\mathrm{d}\xi)^2\sim -\frac14\lambda_2 -\frac{1}{2\omega^2}
  \frac{\vartheta_1'''(0)} {\vartheta_1'(0)}
  +\left(\frac{5}{32}\lambda_2^2 -\frac32\lambda_1 +\frac{\lambda_2}
    {8\omega^2}\frac{\vartheta_1'''(0)}{\vartheta_1'(0)}\right)\xi^2+O(\xi^4)
\end{align*}
Equating coefficients of expansions in (\ref{master}), we get
\begin{equation}
\varkappa=\frac{\eta}{2\omega} = \frac{1}{24}\lambda_2
-\frac{1}{24\omega^2} \frac{\vartheta_1'''(0)}{\vartheta_1'(0)}.
\label{Weierstrass3}
\end{equation}
At $\lambda_2=0$, the Weierstrass formula (\ref{Weierstrass1})
follows. We can check that we get again (\ref{Weierstrass3}) on
equating coefficients of the $\xi^2$ term.  In the next section, we
develop the same expansion procedure for the genus two curve, omitting
details of the underlying computer algebra calculations.

\subsection{Genus two curve}
As the principal example in this paper, consider a genus two
hyperelliptic curve of the form (\ref{hypcurve})
\begin{align}\begin{split}
    y^2&=4x^5+\lambda_4x^4+\lambda_3x^3+\lambda_2x^2 +\lambda_1x +\lambda_0\\
    &=4(x-e_1)(x-e_2)(x-e_3)(x-e_4)(x-e_5),
\end{split}\label{g2curve}
\end{align}
where $e_1,\ldots, e_5$ are finite branch points.

Fix a homology basis $\mathfrak{a}_1 ,\mathfrak{a}_2; \mathfrak{b}_1,
\mathfrak{b}_2$ and calculate the $2\times2$ period matrices of the
holomorphic and meromorphic differentials $2\omega, 2\omega',
2\eta,2\eta'$.  Also introduce the Riemann period matrix $\tau$ and
the necessarily symmetric matrix $\varkappa$,
\[
\tau= \omega^{-1}\omega' , \quad \tau^T =\tau,\, \mathrm{Im}\, \tau>0,
\quad\text{and}\quad \varkappa = \frac{1}{2} \eta \omega^{-1}, \; \varkappa^T =
\varkappa.
\]
Denote
\begin{equation}
(2\omega)^{-1} = (\boldsymbol{U},\boldsymbol{V}),
\end{equation}
where $\boldsymbol{U}=(U_1,U_2)^T$ and $\boldsymbol{V}=(V_1,V_2)^T$
are 2-column vectors called {\it winding vectors}.  We will use below the
directional derivatives denoted by
\begin{align*}
  \partial_{\boldsymbol{U}} \Psi(\boldsymbol{z}) &=U_1
  \frac{\partial}{\partial z_1}\Psi(z_1,z_2)+U_2
  \frac{\partial}{\partial z_2}\Psi(z_1,z_2),
  \\ \partial_{\boldsymbol{V}} \Psi(\boldsymbol{z}) &=V_1
  \frac{\partial}{\partial z_1}\Psi(z_1,z_2)+V_2
  \frac{\partial}{\partial z_2}\Psi(z_1,z_2),
\end{align*}
where $ \Psi(\boldsymbol{z})$ is a function of two complex variables.
Higher derivatives
$\partial_{\boldsymbol{U}^2}, \partial_{\boldsymbol{U}\boldsymbol{V}}$
are defined accordingly.

Beside the canonical $\theta$-function, $\theta$-functions with
half-integer characteristics will be used to formulate our
result
\begin{align}
\begin{split}
  \theta[\varepsilon](\boldsymbol{z};\tau) & = \theta
  \left[  \begin{array}{cc}  \boldsymbol{ \varepsilon}^T \\
      \boldsymbol{\varepsilon'}^T \end{array}\right](\boldsymbol{z};\tau)
  = \theta \left[  \begin{array}{cc}   \varepsilon_1& \varepsilon_2\\
      \varepsilon_1'&\varepsilon_2'  \end{array}\right](z_1,z_2;\tau) \\
  &=\sum_{\boldsymbol{n}\in \mathbb{Z}^2} \mathrm{exp} \left\{
    \imath\pi \left(
      \boldsymbol{n}+\boldsymbol{\varepsilon}\right)^T\tau \left(
      \boldsymbol{n}+\boldsymbol{\varepsilon}\right)+2\imath\pi \left(
      \boldsymbol{z}+\boldsymbol{\varepsilon}'\right)^T\left(
      \boldsymbol{n}+\boldsymbol{\varepsilon}\right) \right\}
\end{split} \label{theta}
\end{align}
with characteristics $\varepsilon_i,\varepsilon_i'=0 $ or $1/2$. We
will also use the $\theta$-relation for characteristics
$[\varepsilon]$ and $[\rho]$
\begin{align}
\begin{split}
  &\theta[\varepsilon](\boldsymbol{z}+\tau\boldsymbol{ \rho} +
  \boldsymbol{\rho}';\tau)\\&=
  \theta[\varepsilon+\rho](\boldsymbol{z};\tau) \mathrm{exp}\left\{
    -\imath\pi\boldsymbol{\rho}^T\tau\boldsymbol{\rho} -2\imath\pi
    \boldsymbol{\rho}^T\boldsymbol{z} -2\imath\pi(\boldsymbol{\rho}'
    +\boldsymbol{\varepsilon}')^T\boldsymbol{\rho} \right\}.
\label{thetarelation}
\end{split}
\end{align}

There are sixteen $\theta$-functions with characteristics, 6 of which
are odd functions of $\boldsymbol{z}$, and 10 of which are even.  We
denote the corresponding odd (even) characteristics as $[\delta_1],
\ldots [\delta_6]$ ($[\varepsilon_1],\ldots, [\varepsilon_{10}]$). We
will also call the half-period
$\tau\boldsymbol{\varepsilon} +\boldsymbol{\varepsilon}'$ odd or even
whenever its characteristic is odd or even.

The $\theta$-functions with characteristics and zero argument are
called $\theta$-constants.  There are 10 non-vanishing even
$\theta$-constants $\theta[\varepsilon_i]
=\theta[\varepsilon_i](\boldsymbol{0};\tau)$, $i=1,\ldots,10$.  The 6
derivative odd $\theta$-constants satisfy
\[
\begin{split}
\theta_1[\delta_i]
= \partial\theta[\delta_i](z_1,z_2;\tau)/\partial z_1
\vert_{\boldsymbol{z}=0},\quad \theta_2[\delta_i]
 = \partial\theta[\delta_i](z_1,z_2;\tau)/ \partial
z_2 \vert_{\boldsymbol{z}=0},
\end{split}
\]
$ \theta_1[\delta_i],\theta_2[\delta_i] \neq 0, $ simultaneously for
all $i=1,\ldots,6$, i.e. $\mathrm{grad}\, \theta[\delta_i]\neq 0$.

The following half-period is odd
\[
\mathfrak{A}_i= \tau \left(  \begin{array}{c}  \delta_{i,1}\\
    \delta_{i,2} \end{array} \right)+\left( \begin{array}{c}
    \delta_{i,1}'\\ \delta_{i,2}' \end{array} \right), \quad
\mathfrak{A}_i\in (\theta)
\]
and belongs to the $\theta$-divisor, $(\theta)$.  According to the Riemann
vanishing theorem, it is represented in the form
\[
\mathfrak{A}_i=  (2\omega)^{-1}  \int_{(\infty,\infty)}^{(e_i,0)}
\boldsymbol{u}+\boldsymbol{K}_{P_0},
\]
where $\boldsymbol{K}_{P_0}$ is the vector of Riemann constants
with base at $P_0=(\infty,\infty)$.

The 10 even half-periods are represented in the form
\[
\mathfrak{A}_{i,j}=  (2\omega)^{-1}\left(
  \int_{(\infty,\infty)}^{(e_i,0)} \boldsymbol{u} +
  \int_{(\infty,\infty)}^{(e_j,0)} \boldsymbol{u}
\right)+\boldsymbol{K}_{P_0}, \quad 1\leq i < j \leq 5
\]
and one can denote the corresponding even characteristic as
$[\varepsilon_{i,j}]$.

There are formulae due to Bolza \cite{bol886}, see also \cite{ehkkls12},
which express the branch points $e_i$ in terms of derivative
$\theta$-constants and also which find the correspondence between
branch points and odd characteristics
\begin{equation}
  e_i\leftrightarrow [\delta_i] :\qquad
  e_i=- \frac{\partial_{\boldsymbol{U}} \theta[\delta_i] }
  {\partial_{\boldsymbol{V}} \theta[\delta_i]}, \quad i=1,\ldots,5.
\end{equation}

Because $e_6=\infty$, the characteristic $[\delta_6]$ is the
characteristic of the vector of Riemann constants,
$[\gamma]=[\delta_6]$. Therefore only 5 half-periods
$\boldsymbol{\mathfrak{A}}_i$ satisfy the condition (\ref{condition}).

We emphasise, that the procedure described here allows us to find the
vector of Riemann constants and the correspondence between half
periods and branch points, using Maple/algcurves software.  It is not
necessary to visualise the homology basis generated by the
Tretkoff-Tretkoff algorithm and enciphered in the Maple program.  Our
procedure allows us to represent the matrix $\varkappa= 2\eta
\omega^{-1}$ in the following form \cite{ehkkls12}:
\begin{prop}
  For any pair of integers $i,j$ $1\leq i < j \leq 5$ one can write 10
  relations for each of the 10 even characteristic $[\varepsilon_{i,j}]$
\begin{align}
\begin{split}
  \varkappa= &-\frac12 \left(\begin{array}  {cc}
      e_ie_j(e_k+e_m+e_n)+e_ke_me_n&-e_ie_j \\
      -e_ie_j& e_i+e_j\end{array} \right)\\&-\frac12
  {(2\omega)^{-1}}^T \frac{1}{\theta[\varepsilon_{i,j}]}\left(
    \begin{array}{cc} \theta_{1,1}[\varepsilon_{i,j}] &
      \theta_{1,2}[\varepsilon_{i,j}] \\\\
      \theta_{1,2}[\varepsilon_{i,j}]&\theta_{2,2}[\varepsilon_{i,j}]
   \end{array}
  \right)(2\omega)^{-1}
\end{split} \label{kappaformula1}
\end{align}
with $k\neq m\neq n \neq i \neq j \in \{1,\ldots, 5\}$ and
$\theta_{r,s}[\varepsilon] = \partial^2 \theta[\varepsilon] / \partial
z_{r} \partial z_s $.
\end{prop}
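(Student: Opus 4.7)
The plan is to combine the $\sigma$--$\theta$ relation (\ref{sigmafund}) with Baker's solution (\ref{JIP}) of the Jacobi inversion problem, evaluating at the image of the even two--divisor $D = (e_i,0) + (e_j,0)$. First I would differentiate (\ref{sigmafund}) logarithmically twice; using (\ref{wp}) this yields
\[
  \wp_{r,s}(\boldsymbol{z}) = -2\varkappa_{r,s} - \frac{\partial^2}{\partial z_r \partial z_s} \log \theta[\gamma]\bigl((2\omega)^{-1}\boldsymbol{z}; \tau\bigr),
\]
or equivalently, by the chain rule,
\[
  2\varkappa = -\bigl(\wp_{r,s}(\boldsymbol{z})\bigr)_{r,s=1}^2 - {(2\omega)^{-1}}^{T}\,\mathrm{Hess}_{\boldsymbol{v}}\log \theta[\gamma](\boldsymbol{v};\tau)\Big|_{\boldsymbol{v}=(2\omega)^{-1}\boldsymbol{z}}\,(2\omega)^{-1}.
\]
The strategy is then to evaluate at $\boldsymbol{z}_0$ such that $(2\omega)^{-1}\boldsymbol{z}_0 = \tau\boldsymbol{\rho} + \boldsymbol{\rho}'$ with the half-integer characteristic $[\rho]$ chosen so that $[\gamma]+[\rho] = [\varepsilon_{i,j}]$.

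Next I would reduce the theta Hessian at this half-period to data at the origin. Applying the quasi-periodicity (\ref{thetarelation}) to $\theta[\gamma]$ shifts its argument to $\boldsymbol{v}=\boldsymbol{0}$ while introducing an exponential factor that is at most linear in $\boldsymbol{v}$; such a linear exponential is annihilated by the Hessian operator. Since $\theta[\varepsilon_{i,j}]$ is even, its gradient at $\boldsymbol{0}$ vanishes, so the logarithmic Hessian collapses to the matrix of second derivatives $\theta_{r,s}[\varepsilon_{i,j}]/\theta[\varepsilon_{i,j}]$. This already accounts for the second matrix on the right--hand side of (\ref{kappaformula1}).

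Third I would compute the three independent entries of $\bigl(\wp_{r,s}(\boldsymbol{z}_0)\bigr)$ using the Jacobi inversion formulas (\ref{JIP}). Setting $y_k=0$ and $x_k \in \{e_i, e_j\}$, the quadratic $x^2 - \wp_{22}\,x - \wp_{12} = 0$ must coincide with $(x-e_i)(x-e_j) = 0$, giving $\wp_{22}(\boldsymbol{z}_0) = e_i + e_j$ and $\wp_{12}(\boldsymbol{z}_0) = -e_i e_j$; the linear relation $\wp_{222}\,x + \wp_{122} = 0$ holding at both distinct roots forces $\wp_{222}(\boldsymbol{z}_0) = \wp_{122}(\boldsymbol{z}_0) = 0$. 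For the remaining entry $\wp_{11}(\boldsymbol{z}_0)$, I would invoke one of Baker's fundamental quartic $\wp$-relations specialized to this branch divisor and recognise the outcome as an elementary symmetric expression in the complementary branch points, obtaining $\wp_{11}(\boldsymbol{z}_0) = e_i e_j(e_k + e_m + e_n) + e_k e_m e_n$ with $\{k,m,n\} = \{1,\ldots,5\} \setminus \{i,j\}$.

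Assembling the two matrix contributions yields (\ref{kappaformula1}). The main technical obstacle I anticipate lies in the third step: whereas $\wp_{12}(\boldsymbol{z}_0)$ and $\wp_{22}(\boldsymbol{z}_0)$ fall out directly of (\ref{JIP}), the diagonal entry $\wp_{11}(\boldsymbol{z}_0)$ has to be extracted from the auxiliary Baker--Klein $\wp$-system and then matched against the factorisation $\tfrac14 y^2 = (x-e_i)(x-e_j)\cdot(x-e_k)(x-e_m)(x-e_n)$ to recognise the resulting combination of branch points in the stated symmetric form. A secondary bookkeeping task is to verify that the exponential prefactor produced by (\ref{thetarelation}) at the half-period of interest is genuinely linear (not quadratic) in $\boldsymbol{v}$, so that it contributes no spurious constant matrix to the Hessian.
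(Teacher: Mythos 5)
Your plan is essentially the paper's own proof: evaluate $\wp_{rs}$ at the Abelian image $\boldsymbol{z}^{(i,j)}$ of $(e_i,0)+(e_j,0)$, read off $\wp_{22},\wp_{12},\wp_{11}$ from Baker's solution of the Jacobi inversion problem, and convert the logarithmic Hessian of the $\theta$-function at the corresponding even half-period into $\theta_{r,s}[\varepsilon_{i,j}]/\theta[\varepsilon_{i,j}]$ via the relation (\ref{thetarelation}), with the quadratic exponential in (\ref{sigmafund}) supplying the $-2\varkappa$ term. The only minor difference is that the paper obtains the diagonal entry directly from the closed formula $\wp_{11}(\boldsymbol{z}^{(i,j)})=F(e_i,e_j)/\bigl(4(e_i-e_j)^2\bigr)$ with $F$ the Kleinian $2$-polar, rather than from the quartic $\wp$-relations you anticipate needing.
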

\begin{proof}
Let
\[
\wp_{mn}(\boldsymbol{z}) =-\frac{\partial^2}{\partial z_m\partial
  z_n} \, \mathrm{ln} \, \sigma(\boldsymbol{z}), \quad m,n=1,2 .
\]
Let $\boldsymbol{z}^{(i,j)}$ be the Abelian image of two branch
points, $e_i,e_j$
\[
\boldsymbol{z}^{(i,j)}=  \int_{P_0}^{(e_i,0)}
\boldsymbol{u}  + \int_{P_0}^{(e_j,0)} \boldsymbol{u}.
\]
Then Baker's solution of the Jacobi inversion problem (\ref{JIP})
leads to the equalities
\begin{align*}
  \wp_{22}(\boldsymbol{z}^{(i,j)})&=e_i+e_j,\\
  \wp_{12}(\boldsymbol{z}^{(i,j)})&=-e_ie_j,\\
  \wp_{11}(\boldsymbol{z}^{(i,j)})&=\frac{F(e_i,e_j)}{4(e_i-e_j)^2}=
  e_ie_j(e_k+e_m+e_n)+e_ke_me_n,
\end{align*}
where $i\neq j \neq k \neq m \neq n \in \{1,\ldots, 5\}$.  Taking into
account the definition of the $\sigma$-function (\ref{sigmafund}) in
terms of $\theta$-functions, as well the $\theta$-relation,
(\ref{thetarelation}), we find
\begin{align*}
  \wp_{11}(\boldsymbol{z}^{(i,j)})=
  -2\varkappa_{1,1}-\frac{\partial_{\boldsymbol{U}}^2\theta_{1,1} [
    \varepsilon_{i,j}]} { \theta[\varepsilon_{i,j}] }
\end{align*}
and similar expressions for $\wp_{12}(\boldsymbol{z}^{(i,j)})$,
$\wp_{22}(\boldsymbol{z}^{(i,j)})$.  Solving the above equations with
respect to $\varkappa_{i,j}$, we get (\ref{kappaformula1}).
\end{proof}

The formulae (\ref{kappaformula1}) represent the generalization of the
Weierstrass formulae
\begin{align}
2\eta\omega=-2e_1\omega^2-\frac12\frac{\vartheta_2''}{\vartheta_2}
=-2e_2\omega^2-\frac12\frac{\vartheta_3''}{\vartheta_3}=-2e_3\omega^2
-\frac12\frac{\vartheta_4''}{\vartheta_4}
\end{align}
to the genus two hyperelliptic curve.  Recall that in the Weierstrass theory,
$e_1+e_2+e_3=0$, so adding these three formulae gives the first of
(\ref{Weierstrass1}).

For typographical convenience, we will use a shorter notations for
directional derivatives
\begin{align*} &\partial_{\boldsymbol{U}}
  \theta[\varepsilon]=\Theta_1[\varepsilon],\quad \partial_{\boldsymbol{V}}
  \theta[\varepsilon]=\Theta_2[\varepsilon],\quad \partial_{\boldsymbol{U}^2}
  \theta[\varepsilon]=\Theta_{1,1}[\varepsilon],\\
  &\partial_{\boldsymbol{U}\boldsymbol{V}}
  \theta[\varepsilon]=\Theta_{1,2}[\varepsilon],\quad \partial_{\boldsymbol{V}^2}
  \theta[\varepsilon]=\Theta_{2,2}[\varepsilon],\quad
  \text{etc.}
\end{align*}
also, $\theta[\varepsilon]=\Theta[\varepsilon]$ at even
$[\varepsilon]$.  Summing (\ref{kappaformula1}) over 10 even
characteristics, and using the above notation for directional
derivatives, we get
\begin{equation}
  \varkappa =\frac{1}{80} \left(\begin{array}{cc}
      4\lambda_2 & \lambda_3 \\
      \lambda_3 & 4\lambda_4
    \end{array}  \right) -\frac{1}{20}
  \sum_{ 10 \;\; \rm{even}\;\; [\varepsilon]} \frac{1}{\Theta[\varepsilon]}
  \left(\begin{array}{cc}
      \Theta_{1,1}[\varepsilon]
      &  \Theta_{1,2}[\varepsilon]\\
\\  \Theta_{1,2}[\varepsilon]&
   \Theta_{2,2}[\varepsilon]
\end{array}\right).\label{kappaformula2}
\end{equation}

The representation (\ref{kappaformula2}) can be compared to the
formula of Korotkin-Schramchenko \cite{ksh12}, derived for a general
algebraic curve.  We emphasise that (\ref{kappaformula2}) is written
in the Baker basis (\ref{bakerbasis}), which enables us to write, in
simpler form, the differential equations for the $\wp$-symbols, the
multi-dimensional generalisations of the Weierstrass $\wp$-function.

Now we are in the position to present generalisations of the
Weierstrass formula (\ref{Weierstrass1}) to genus two curves. All the
following relations are obtained by the expansion procedure described
above, exemplified in the case of the elliptic curve.
\begin{prop}
Denote according to the Bolza formula
\[
\frac{  \Theta_1[\delta_i]}
{  \Theta_2[\delta_i]}=-e_i
,\]
where $e_1,\ldots,e_5$ are branch points of the curve $\mathcal{C}$.
Then the entries to the matrix $\varkappa$ have the form
\begin{align}
  \varkappa_{2,2} &=\frac{1}{24}\lambda_4-\frac16 e_i
  -\frac1{6}\frac{\Theta_{2,2,2}[\delta_i]}
   {\Theta_2[\delta_i]},\label{kappa22}\\
  \varkappa_{1,2} &=-\frac{1}{24} \lambda_4 e_i-\frac13
  e_i^2-\frac{1}{12}e_i
  \frac{ \Theta_{2,2,2}[\delta_i]}
  {\Theta_2[\delta_i]}
  -\frac1{4}\frac{
    \Theta_{1,2,2}[\delta_i]} { \Theta_2[\delta_i]},
\label{kappa12}\\
\begin{split}
  \varkappa_{1,1} &= -\frac{\lambda_3}8e_i-\frac{5}{24}\lambda_4
  e_i^2-\frac76e_i^3 -\frac12
  \frac{\Theta_{1,1,2}[\delta_i]}
  {\Theta_2[\delta_i]}-\frac12e_i
  \frac{\Theta_{1,2,2}[\delta_i]}
  { \Theta_2[\delta_i]}\\&-\frac{1}{6}e_i^2
  \frac{ \Theta_{2,2,2}[\delta_i]}
  {\Theta_2[\delta_i]}.
       \end{split}
  \label{kappa11}
\end{align}
\end{prop}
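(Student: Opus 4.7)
The plan is to apply the expansion procedure of Section~2 to the master equation $S_{FW}(P)\equiv S_{KW}(P)$ at the local coordinate $\xi$ around $P_0=(\infty,\infty)$ with $x=1/\xi^2$. For each branch point $e_i$, take $\mathfrak{A}=\boldsymbol{\mathfrak{A}}_i$, the odd half-period labelled by $[\delta_i]$; it lies in $(\theta)$, has non-vanishing gradient, and satisfies the condition (\ref{condition}). Bolza's formula $e_i=-\Theta_1[\delta_i]/\Theta_2[\delta_i]$ will be applied systematically to convert ratios of first-order $\theta$-derivatives into branch points.

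For the Klein--Weierstrass side I substitute $x=1/\xi^2$ into (\ref{g2curve}) and expand $y,\ 1/y,\ y'',\ u_1=dx/y,\ u_2=x\,dx/y$, and the Baker second-kind differentials $r_1,r_2$ from (\ref{bakerbasis}), as power series in $\xi$. Using (\ref{SH}), the pieces $\{x,\xi\}\,d\xi^2-\frac32 (y''/y)\,dx^2$ and $6\boldsymbol{u}^T\boldsymbol{r}$ contribute only explicit polynomials in $\lambda_2,\lambda_3,\lambda_4$, while the $\xi^0,\xi^2,\xi^4$ coefficients of $12\boldsymbol{u}^T\varkappa\boldsymbol{u}/d\xi^2$ form a lower-triangular combination involving respectively $\varkappa_{2,2}$; then $\varkappa_{1,2}$ with $\varkappa_{2,2}$; then $\varkappa_{1,1}$ with $\varkappa_{1,2},\varkappa_{2,2}$. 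This identifies the $\varkappa$-part of the left-hand sides of (\ref{kappa22})--(\ref{kappa11}).

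For the Fay--Wirtinger side, expressing $v_j=U_j u_1+V_j u_2$ turns the quantities in (\ref{HQT}) into homogeneous polynomials in $u_1(P),u_2(P)$ whose coefficients are $\Theta_j[\delta_i]$, $\Theta_{j,k}[\delta_i]$, $\Theta_{j,k,l}[\delta_i]$. I expand $H_{\mathfrak{A}_i}$, $Q_{\mathfrak{A}_i}$, $T_{\mathfrak{A}_i}$ in $\xi$ to the required order, then assemble the Schwartzian of $\int^P H_{\mathfrak{A}_i}$ together with the rational combinations $(Q_{\mathfrak{A}_i}/H_{\mathfrak{A}_i})^2$ and $T_{\mathfrak{A}_i}/H_{\mathfrak{A}_i}$. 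After dividing by $\Theta_2[\delta_i]$ and applying $\Theta_1[\delta_i]=-e_i\Theta_2[\delta_i]$, all remaining ratios of first derivatives disappear, and the only $\theta$-quantities that appear through order $\xi^4$ are precisely the three normalised third derivatives $\Theta_{2,2,2}[\delta_i]/\Theta_2[\delta_i]$, $\Theta_{1,2,2}[\delta_i]/\Theta_2[\delta_i]$, $\Theta_{1,1,2}[\delta_i]/\Theta_2[\delta_i]$.

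Equating coefficients of $\xi^0,\xi^2,\xi^4$ in $S_{FW}\equiv S_{KW}$ produces a triangular $3\times 3$ linear system whose back-substitution gives (\ref{kappa22})--(\ref{kappa11}). The main obstacle is the symbolic bookkeeping needed to expand the Schwartzian and the ratio $T_{\mathfrak{A}_i}/H_{\mathfrak{A}_i}$ through $O(\xi^4)$; conceptually the step is routine and is best carried out with computer algebra. That the resulting expressions are independent of $i\in\{1,\ldots,5\}$ is forced by the uniqueness of $\varkappa$ and serves as a nontrivial consistency check on the calculation.
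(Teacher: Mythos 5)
Your proposal follows essentially the same route as the paper: the authors' proof simply invokes the expansion of $S_{FW}(P)\equiv S_{KW}(P)$ in the local parameter at $P_0=(\infty,\infty)$ with $\mathfrak{A}$ taken to be the odd half-period attached to $e_i$, followed by elimination of the $\varkappa_{i,j}$, with the computer-algebra details omitted. Your description of the triangular structure in the coefficients of $\xi^0,\xi^2,\xi^4$ (coming from $u_2^2\sim d\xi^2$, $u_1u_2\sim\xi^2d\xi^2$, $u_1^2\sim\xi^4d\xi^2$) and of the role of the Bolza relation is a correct and somewhat more explicit account of exactly that calculation.
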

\begin{proof}
  The proof is based on the expansion procedure of (\ref{master}) with
  subsequent elimination of matrix elements $\varkappa_{i,j}$ from the
  relations obtained. We omit here rather cumbersome computer algebra
  details.
\end{proof}

Summing up over all 5 representations of $\varkappa_{2,2}$, we get
\begin{equation}
  \varkappa_{2,2}=\frac{1}{20}\lambda_4 - \frac{1}{30}
  \sum_{ 5 \;\; \rm{odd} \;\; [\delta] }
  \frac{
    \Theta_{2,2,2}[\delta]}{\Theta_2[\delta]},
\label{kappaformula3}
\end{equation}
where we exclude from the summation the characteristic $[\delta_6]$ of
the vector of Riemann constants for which $
\Theta_2[\gamma]=0$.  Analogously for $\varkappa_{1,2}$ and
$\varkappa_{1,1}$ we get
\begin{align} \begin{split} \varkappa_{1,2}& =\frac{1}{40}\lambda_3
    -\frac{1}{800}\lambda_4^2 -\frac{1}{20}\sum_{ 5 \;\; \rm{odd} \;\;
      [\delta] }
      \frac{\Theta_{1,2,2}[\delta]}
    { \Theta_2[\delta]}+\frac{1}{1200}\lambda_4\sum_{ 5 \;\; \rm{odd} \;\; [\delta] }
    \frac{
      \Theta_{2,2,2}[\delta]}{\Theta_2[\delta]}
\end{split} \label{kappa12sum}
 \end{align}
and
\begin{align} \begin{split} \varkappa_{1,1}&=\frac{3}{40}\lambda_2
    -\frac{1}{400}\lambda_4\lambda_3 +\frac{1}{8000}\lambda_4^3
    -\frac{1}{10} \sum_{ 5 \;\; \rm{odd} \;\; [\delta] }
    \frac{
      \Theta_{1,1,2}[\delta]}{
      \Theta_2[\delta]}\\
    & +\frac{1}{200}\lambda_4\sum_{ 5 \;\; \rm{odd} \;\; [\delta] }
    \frac{ \Theta_{1,2,2}[\delta]}
    { \Theta_2[\delta]}
    -\frac{1}{12000}\lambda_4^2\sum_{ 5 \;\; \rm{odd} \;\; [\delta] }
    \frac{
      \Theta_{2,2,2}[\delta]}{ \Theta_2[\delta]}.
\end{split} \label{kappa11sum}
 \end{align}

When $\lambda_4=0$, the $\varkappa$-matrix takes the simpler form
\begin{align}
\begin{split}
  \varkappa&=\frac{1}{40}\left(  \begin{array}{cc} 3\lambda_2&\lambda_3\\
      \lambda_3&0 \end{array}\right) \\ & \qquad -\frac{1}{20}\sum_{ 5
    \;\; \rm{odd} \;\; [\delta] } \frac{1}{
    \Theta_2[\delta]} \left( \begin{array}{cc}
      2 \Theta_{1,1,2}[\delta]&
       \Theta_{1,2,2}[\delta]\\\\
      \Theta_{1,2,2}[\delta]&
      \frac23 \Theta_{2,2,2}[\delta]
 \end{array} \right).
\end{split}
\end{align}

\subsection{Certain $\theta$-constant relations }
Comparing the formulae (\ref{kappaformula2}) derived in
\cite{ehkkls12}, and the above formula, we conclude
\begin{prop} Let $\mathcal{C}$ be the genus two hyperelliptic curve
  with branch point at infinity and realised in the form
\[
y^2=4x^5+\lambda_4x^4+\lambda_3x^3+\lambda_2x^2+\lambda_1x+\lambda_0,
\quad \lambda_i\in \mathbb{C}.
\]
Let $2\omega$ be the matrix of $\mathfrak{a}$-periods of holomorphic
differentials.  Then the
following relation holds
\begin{equation}
  \sum_{ 5 \;\; \rm{odd} \;\; [\delta] }
  \frac{
\Theta_{2,2,2}[\delta]}{\Theta_2[\delta]}=
\frac32\sum_{ 10 \;\; \rm{even}\;\; [\varepsilon]}
\frac{
\Theta_{2,2}[\varepsilon]}{ \Theta[\varepsilon]}.
\label{newformula}
\end{equation}
There exists necessarily one odd
characteristic $[\delta]$ for which
$\Theta_2[\delta]= 0$ and the summation on the left
hand side over the odd $[\delta]$ runs over the remaining 5.
\end{prop}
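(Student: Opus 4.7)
The plan is to derive the identity directly by comparing the two independent expressions for the single matrix entry $\varkappa_{2,2}$ that have already been established in the paper. The key observation is that $\varkappa_{2,2}$ admits two distinct summation formulae: one in terms of second $\theta$-derivatives summed over the ten even characteristics (obtained from the proposition giving (\ref{kappaformula2})), and one in terms of third directional derivatives summed over the five admissible odd characteristics (obtained from (\ref{kappa22}) via (\ref{kappaformula3})). Since both compute the same number, their difference must vanish identically on Siegel upper-half space.

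Concretely, the first step is to extract the $(2,2)$-entry of (\ref{kappaformula2}), which reads
\begin{equation*}
\varkappa_{2,2} \;=\; \frac{\lambda_4}{20} \;-\; \frac{1}{20}\sum_{10\text{ even }[\varepsilon]}\frac{\Theta_{2,2}[\varepsilon]}{\Theta[\varepsilon]}.
\end{equation*}
The second step is to recall (\ref{kappaformula3}), namely
\begin{equation*}
\varkappa_{2,2} \;=\; \frac{\lambda_4}{20} \;-\; \frac{1}{30}\sum_{5\text{ odd }[\delta]}\frac{\Theta_{2,2,2}[\delta]}{\Theta_2[\delta]},
\end{equation*}
where the summation excludes the characteristic $[\delta_6]=[\gamma]$ of the vector of Riemann constants, for which $\Theta_2[\gamma]=0$ by construction (the Bolza correspondence assigns $[\delta_6]$ to the branch point at infinity). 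Equating the two right-hand sides, the $\lambda_4/20$ terms cancel, and clearing the factor $1/30$ yields precisely
\begin{equation*}
\sum_{5\text{ odd }[\delta]}\frac{\Theta_{2,2,2}[\delta]}{\Theta_2[\delta]} \;=\; \frac{3}{2}\sum_{10\text{ even }[\varepsilon]}\frac{\Theta_{2,2}[\varepsilon]}{\Theta[\varepsilon]}.
\end{equation*}

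There is essentially no further obstacle, because the heavy lifting has already been done in the proofs of the preceding propositions: the even-characteristic formula follows from Baker's Jacobi-inversion expressions for $\wp_{mn}$ at two-branch-point divisors together with the $\theta$-transformation law (\ref{thetarelation}), while the odd-characteristic formula follows from the $S_{FW}\equiv S_{KW}$ expansion procedure at $P_0$ using an odd half-period $\mathfrak{A}_i$ satisfying the non-vanishing condition (\ref{condition}). The only point that warrants care is verifying that exactly one odd characteristic, namely $[\gamma]=[\delta_6]$, violates (\ref{condition}) and must be omitted from the odd sum; this is precisely the statement asserted in the second sentence of the proposition and is guaranteed by the Bolza correspondence $e_i\leftrightarrow[\delta_i]$ together with $e_6=\infty$. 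With this accounting in place, the identity is an immediate consequence of the uniqueness of $\varkappa_{2,2}$.
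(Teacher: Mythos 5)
Your proposal is correct and is exactly the paper's own argument: the identity (\ref{newformula}) is obtained by equating the $(2,2)$-entry of (\ref{kappaformula2}) with (\ref{kappaformula3}), whereupon the $\lambda_4/20$ terms cancel and the factor $3/2$ emerges from $\tfrac{1}{20}\cdot 30$. Your accounting for the excluded characteristic $[\delta_6]=[\gamma]$ via the Bolza correspondence and $e_6=\infty$ also matches the paper.
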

This is a generalization of the Weierstrass formula,
\[
\frac{\vartheta_1'''(0)}{\vartheta_1'(0)} =
\frac{\vartheta_2''(0)}{\vartheta_2(0)}
+\frac{\vartheta_3''(0)}{\vartheta_3(0)}  +
\frac{\vartheta_4''(0)}{\vartheta_4(0)}   .
\]

Comparing in the same way the expressions (\ref{kappaformula2}) and
(\ref{kappa12sum}), (\ref{kappa11sum}), we find that when $\lambda_4=0$:
\begin{equation}
  4\sum_{ 5 \;\; \rm{odd} \;\; [\delta] }
  \frac{
    \Theta_{1,2,2}[\delta]}{ \Theta_2[\delta]}
  -4\sum_{ 10 \;\; \rm{even}\;\; [\varepsilon]}
  \frac{
    \Theta_{1,2}[\varepsilon]}{ \Theta[\varepsilon]} =
  \lambda_3\label{newformula2}
\end{equation}
and
\begin{equation}
  4\sum_{ 5 \;\; \rm{odd} \;\; [\delta] }
  \frac{
    \Theta_{1,1,2}[\delta]}{ \Theta_2[\delta]}
  -2\sum_{ 10 \;\; \rm{even}\;\; [\varepsilon]}
  \frac{
    \Theta_{1,1}[\varepsilon]}{ \Theta[\varepsilon]}=\lambda_2.
\label{newformula3}
\end{equation}

The formulae (\ref{newformula2}) and (\ref{newformula3}) express the
parameters of the curve, in this case - the symmetric combination of
branch points - in terms of sums of theta constants which are
symmetric with respect to characteristics.  These formulae can be
interpreted as a new kind of Thomae-type formulae.  The derivation of
such classes of relations in the case of non-hyperelliptic curves
would be of interest.

The above formulae and (\ref{kappa22}) - (\ref{kappa11}) lead to
various generalisations of the Jacobi derivative formula.
E.g. subtracting the two expressions (\ref{kappa22}) written for
different indices $i$ and $j$, we get (using the classical {\it
  Rosenhain derivative formula} \cite{ros851} for simplifications)
\begin{align}
\begin{split}
 & \pm \pi^2 \mathrm{det} (2\omega)^{-1} \Theta[\varepsilon_p]
  \Theta[\varepsilon_q] \Theta[\varepsilon_r] \Theta[\varepsilon_s]\\
  &\qquad \qquad=
  \Theta_{2,2,2}[\delta_i]
  \Theta_2[\delta_j]- \Theta_{2,2,2}[\delta_j]
 \Theta_2[\delta_i],
\end{split} \label{genjacder}
\end{align}
where $[\delta_i], [\delta_j]$ are two arbitrary odd characteristics
from the set of $[\delta_1], \ldots, [\delta_6]$, and 4 even
characteristics, $[\varepsilon_p], [\varepsilon_q], [\varepsilon_r],
[\varepsilon_s]$ are of the form $[\delta_i]+[\delta_j]+[\delta_k] \;
\mathrm{mod}\;2$ where $k\in \{1,2,3,4,5,6\} / \{i,j\}$.  This formula
can be interpreted as a {\it Higher Rosenhain derivative formula}.

New interesting generalisations of the Jacobi derivative formula were
recently found by Grushevsky and Salvati Manni \cite{gm05}, who also
presented a detailed list of references to the other known
generalizations in their paper.  We do not discuss here the relevance
of the formulae obtained here to the results \cite{gm05} but plan to
consider this question in a separate publication.

Concluding, we note that the procedures described here, of the
derivation of formulae of the form (\ref{kappa22}) - (\ref{kappa11}),
works in all cases when the Klein-Weierstrass algebraic representation
of the bi-differential $\Omega(Q,R)$ is known.  Therefore the next cases
that could be analyzed are the cases $(3,s)$ - trigonal curves,
\begin{equation}
y^3-a_2(x)y^2-a_1(x)y-a_0(x)=0
\end{equation}
with appropriate polynomials $a_1(x)$ and $a_0(x)$. Analytic expressions
for the basic meromorphic differentials can be found in \cite{bel12},
\cite{eemop08}, whilst expressions for the projective connection $S_{KW}(P)$
are given in the course of the proof of Prop. 2.1.

{\bf Acknowledgements} Two of the authors, JCE and VZE, are grateful
to S.\ Grushevky for a stimulating discussion on the possibilities of
deriving Thomae-type formulae that express parameters of the curves,
such as coefficients of defining polynomial $\lambda$, in terms of
$\theta$-constants during a meeting in ICMS, Edinburgh, in October
2012. The authors are grateful to V.\ Buchstaber, S.\ Grushevsky, D.\
Korotkin, and A. \ Nakayashiki who agree to read a draft of this paper
before its publication and made remarks that we took into account in
the final version.  KE and VZE gratefully acknowledges the Deutsche
Forschungsgemeinschaft (DFG) for financial support within the
framework of the DFG Research Training group 1620 Models of gravity.
The work of VZE was supported by the School of Mathematics, University
of Edinburgh, under the certificate of sponsorship C5E7V94128U.

\providecommand{\bysame}{\leavevmode\hbox to3em{\hrulefill}\thinspace}
\providecommand{\MR}{\relax\ifhmode\unskip\space\fi MR }
\providecommand{\MRhref}[2]{%
  \href{http://www.ams.org/mathscinet-getitem?mr=#1}{#2}
}
\providecommand{\href}[2]{#2}

\end{document}